\numberwithin{equation}{section}
\numberwithin{figure}{section}
\theoremstyle{plain}
\newtheorem{thm}{\protect\theoremname}
  \theoremstyle{definition}
  \newtheorem{defn}[thm]{\protect\definitionname}
  \theoremstyle{plain}
  \newtheorem{prop}[thm]{\protect\propositionname}
  \theoremstyle{remark}
  \newtheorem{rem}[thm]{\protect\remarkname}
\newcommand{\hide}[1]{}
\newcommand*{\dive}{\operatorname{div}}
\newcommand*{\Grad}{\operatorname{Grad}}
\newcommand*{\Div}{\operatorname{Div}}
\newcommand*{\grad}{\operatorname{grad}}
\newcommand*{\ii}{\mathrm{i}}
\DeclareMathAccent{\Circ}{\mathalpha}{operators}{"17}
\newcommand{\interior}[1]{\Circ{#1}}
\renewcommand{\Im}{\operatorname{\mathfrak{Im}}}
\renewcommand{\Re}{\operatorname{\mathfrak{Re}}}
\newcommand{\oi}[2]{\left]#1,#2 \right[}
\newcommand{\rga}[1]{\left]#1,\infty  \right[}
\renewcommand{\tilde}{\widetilde}
\renewcommand*{\epsilon}{\varepsilon}
\renewcommand*{\rho}{\varrho}
  \providecommand{\definitionname}{Definition}
  \providecommand{\propositionname}{Proposition}
  \providecommand{\remarkname}{Remark}
\providecommand{\theoremname}{Theorem}
\begin{document}
\institut{Institut f\"ur Analysis}

\preprintnumber{MATH-AN-04-2015}

\preprinttitle{A Note on a Two-Temperature Model in Linear Thermoelasticity.}

\author{Santwana Mukhopadhyay, Rainer Picard, Sascha Trostorff, \& Marcus Waurick} 

\makepreprinttitlepage

\selectlanguage{american}%
\setcounter{section}{-1}

\date{}

\selectlanguage{english}%

\title{A Note on a Two-Temperature Model in Linear Thermoelasticity.}

\author{S. Mukhopadhyay%
\thanks{\emph{S. Mukhopadhyay:} Department of Mathematical Sciences, Indian
Institute of Technology (BHU), Varanasi -221005, India: Corresponding
author: E-mail: mukhosant.apm@iitbhu.ac.in %
}, R. Picard\textit{\emph{,}} S. Trostorff, M. Waurick\textit{\emph{}}%
\thanks{\emph{R. Picard}\textit{\emph{,}}\emph{ S. Trostorff, M. Waurick}\textit{\emph{:
Institute for Analysis, Faculty of Mathematics and Sciences, TU Dresden,
Germany.}}%
}}

\maketitle
\setcounter{section}{-1}
\abstract{\textbf{Abstract.} We discuss the so-called two-temperature model in linear thermoelasticity and provide a Hilbert space framework for proving well-posedness of the equations under consideration. With the abstract perspective of evolutionary equations, the two-temperature model turns out to be a coupled system of the elastic equations and an abstract ode. Following this line of reasoning, we propose another model being entirely an abstract ode. We highlight also an alternative way for a two-temperature model, which might be of independent interest.}

\textbf{MSC(2010):} {35F15, 35M32, 35Q74}

\textbf{Keywords:} {Evolutionary Equations, thermoelasticity, Two-Temperature Model, Coupled Systems}

\date{}

\section{Introduction}

Chen and Gurtin \cite{Chen-Gurtin-1968} and Chen et al. \cite{CGW-1968,CGW-1969}
have given the formulation of the theory of heat conduction related
to a deformable body which is based on two different temperatures.
Here the first one is the conductive temperature, $\phi$ and the
other one is the thermodynamic temperature, $\theta$. Chen et al.
\cite{CGW-1968} discussed that these two temperatures are equal in
absence of heat supply in the case of time independent situations
and the difference between these two temperatures is proportional
to the heat supply, where, as generally in the cases of time dependency,
these two temperatures are different. Before these studies, by doing
the study of transient coupled thermoelastic boundary value problem
in half space, Boley and Tolins \cite{key-11} gave the conclusion
that the strain and two temperatures are found to have explanation
in the form of a wave plus a response taking place immediately through
the body. The uniqueness and reciprocity theorems for the two- temperature
thermoelasticity theory in case of a homogeneous and isotropic solid
was reported by Lesan \cite{key-12}. Subsequently, investigations
were carried out on the basis of this theory by several researchers
like, Warren and Chen \cite{key-13}, Warren \cite{key-14}, Amos
\cite{key-15} etc. This theory (2TT) has drawn attention of researchers
in recent years and some specific features of this theory are reported
(see \cite{key-16,key-17,key-18,key-19,key-20,key-21,key-22,key-3},
\cite{key-3-1,quint-2004} and the references there-in). 

A structural formulation for linear material laws in classical mathematical
physics was introduced by Picard \cite{Picard2009} who considered
a class of evolutionary problems which covers a number of initial
boundary value problems of classical mathematical physics. The corresponding
solution theory is also established in \cite{Picard2009}. Prior to
this, Picard \cite{Picard2005} also reported the structural formulation
for linear thermo-elasticity in nonsmooth media. Recently, Mukhopadhyay
et al.~\cite{key-5} have studied various models of thermoelasticity
theory and have shown that the these models can be treated within
the common structural framework of evolutionary equations and considering
the flexibility of the structural perspective they obtained well-posedness
results for a large class of generalized models allowing for more
general material properties such as anisotropies, inhomogeneities,
etc. It should be noted that evolutionary equations in the form just
discussed have also been studied with regards to homogenization theory,
see e.g.~\cite{Waurick2012,Waurick2012a,Waurick2013}. The aim of
this note is to analyze the two-temperature thermoelastic model given
by Chen and Gurtin \cite{Chen-Gurtin-1968} as a first order system
within the framework of evolutionary equations, see e.g.~\cite{key-4}.
The models of thermoelasticity we shall discuss have been originally
conceived as constant coefficient models. There is little harm in
this assumption at this point, since we shall dispose of this simplification
completely, when we discuss more general models in the last section.
An alternative two-temperature thermoelastic model is proposed in
which we can avoid involving roots of an unbounded operator. It is
believed that the general perspective on two-temperature thermoelasticity
to be presented may shed some new light on the theory of homogenization
of such models.

In section \ref{sec:fap}, we discuss the functional analytic background
needed, for discussing the two-temperature model. Section \ref{sec:2tt}
discusses the two-temperature model in detail. In this section, we
will also give a suitable Hilbert space framework allowing for well-posedness
of the respective equation. An observation in section \ref{sec:2tt}
is that the heat equation part is replaced by an abstract ode with
infinite dimensional state space. More precisely, in the heat equation
part the \emph{only} unbounded operator involved is the time-derivative.
Having realized this property of the two-temperature model, we propose
in the two concluding sections \ref{sec:2tt-2s} and \ref{sec:a-2tt}
alternative systems of thermoelasticity. The first one being entirely
an abstract ode in the sense just discussed. The second one describes
a possible alternative model, which does not involve square roots
of operators.

\section{Functional Analytic Preliminaries\label{sec:fap}}

In this section, we shall elaborate on some standard concepts in functional
analysis needed in the following. Most frequently, we will have occasion
to use the square root and the modulus of an operator:
\begin{defn}
Let $H_{0}$, $H_{1}$ be Hilbert spaces. Let $C\colon D(C)\subseteq H_{0}\to H_{0}$
be a non-negative definite, selfadjoint operator, that is, for all
$\phi\in D(C)$ we have $\langle\phi,C\phi\rangle\geq0$ and $C=C^{*}$,
then $\sqrt{C}$ is defined as the unique non-negative definite operator
satisfying $\sqrt{C}\sqrt{C}=C$. For $A\colon D(A)\subseteq H_{0}\to H_{1}$,
a closed and densely defined linear operator, we define the modulus
of $A$, $\left|A\right|$, by
\[
\left|A\right|\coloneqq\sqrt{A^{*}A}.
\]

\end{defn}
Recall that $D(A)=D(\left|A\right|)$ and that $\|A\phi\|=\|\left|A\right|\phi\|$
for all $\phi\in D(A)$. We record the following standard fact:
\begin{prop}
\label{prop:comm_with_sqr}Let $H_{0}$, $H_{1}$ be Hilbert spaces,
$A\colon D(A)\subseteq H_{0}\to H_{1}$ densely defined, closed, linear.
Then
\[
\overline{\left(\sqrt{1+\left|A^{*}\right|^{2}}\right)^{-1}A}=A\left(\sqrt{1+\left|A\right|^{2}}\right)^{-1}\in L(H_{0},H_{1}).
\]
with $\left\Vert A\left(\sqrt{1+\left|A\right|^{2}}\right)^{-1}\right\Vert \leq1$.\end{prop}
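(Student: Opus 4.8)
The plan is to treat the two asserted claims separately: first the boundedness together with the norm bound, which is elementary functional calculus, and then the operator identity, whose substance is an intertwining relation between the resolvents of $\left|A\right|^{2}=A^{*}A$ and $\left|A^{*}\right|^{2}=AA^{*}$. Throughout I write $B\coloneqq\sqrt{1+\left|A\right|^{2}}$ on $H_{0}$ and $C\coloneqq\sqrt{1+\left|A^{*}\right|^{2}}$ on $H_{1}$; both are selfadjoint with $B,C\geq1$, so $B^{-1},C^{-1}$ are everywhere defined, selfadjoint, of norm $\leq1$, and $\operatorname{ran}B^{-1}=D(B)=D(\left|A\right|)=D(A)$. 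In particular $AB^{-1}$ is defined on all of $H_{0}$ and $C^{-1}A$ is defined on $D(A)$.

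For the boundedness I would use the Borel functional calculus for the selfadjoint operator $\left|A\right|$. Since $h(t)\coloneqq t(1+t^{2})^{-1/2}$ satisfies $\sup_{t\geq0}\lvert h(t)\rvert\leq1$, the operator $h(\left|A\right|)=\left|A\right|(1+\left|A\right|^{2})^{-1/2}=\left|A\right|B^{-1}$ is bounded with $\lVert h(\left|A\right|)\rVert\leq1$. Applying the recorded identity $\lVert A\phi\rVert=\lVert\left|A\right|\phi\rVert$ (valid for $\phi\in D(A)=D(\left|A\right|)$) to $\phi=B^{-1}\psi$ gives $\lVert AB^{-1}\psi\rVert=\lVert\left|A\right|B^{-1}\psi\rVert=\lVert h(\left|A\right|)\psi\rVert\leq\lVert\psi\rVert$, which is the claimed estimate $\lVert AB^{-1}\rVert\leq1$.

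The core is the pointwise identity $C^{-1}A\phi=AB^{-1}\phi$ for every $\phi\in D(A)$. The first ingredient is the resolvent intertwining: for $\mu>0$ and $\phi\in D(A)$,
\[
A(\mu+A^{*}A)^{-1}\phi=(\mu+AA^{*})^{-1}A\phi .
\]
To see this I set $\eta\coloneqq(\mu+A^{*}A)^{-1}\phi\in D(A^{*}A)$, so that $A^{*}A\eta=\phi-\mu\eta$; since both $\phi$ and $\eta$ lie in $D(A)$, this shows $A^{*}A\eta\in D(A)$, whence $A\eta\in D(AA^{*})$ and $(\mu+AA^{*})A\eta=\mu A\eta+A(\phi-\mu\eta)=A\phi$, which is the claim. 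The second ingredient is the Balakrishnan representation
\[
T^{-1/2}=\frac{1}{\pi}\int_{0}^{\infty}\lambda^{-1/2}(\lambda+T)^{-1}\,d\lambda
\]
for selfadjoint $T\geq1$, applied to $T=1+A^{*}A$ and $T=1+AA^{*}$. Writing $R_{\lambda}\coloneqq(1+\lambda+A^{*}A)^{-1}$ and $\tilde{R}_{\lambda}\coloneqq(1+\lambda+AA^{*})^{-1}$, the estimate $\lVert AR_{\lambda}\phi\rVert=\lVert\tilde{R}_{\lambda}A\phi\rVert\leq(1+\lambda)^{-1}\lVert A\phi\rVert$ renders $\lambda\mapsto\lambda^{-1/2}AR_{\lambda}\phi$ Bochner integrable on $(0,\infty)$; as $A$ is closed, Hille's theorem permits pulling $A$ through the integral, and the intertwining relation then yields
\[
AB^{-1}\phi=\frac{1}{\pi}\int_{0}^{\infty}\lambda^{-1/2}AR_{\lambda}\phi\,d\lambda=\frac{1}{\pi}\int_{0}^{\infty}\lambda^{-1/2}\tilde{R}_{\lambda}A\phi\,d\lambda=C^{-1}A\phi .
\]

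Finally I would upgrade this pointwise identity to the stated closure identity. The operator $AB^{-1}$ is everywhere defined and bounded, hence closed, and by the previous step it agrees with $C^{-1}A$ on the dense subspace $D(A)$; since a bounded operator is the closure of any restriction to a dense domain, $\overline{C^{-1}A}=AB^{-1}$, as asserted. I expect the main obstacle to be the rigorous justification of moving the merely closed, unbounded operator $A$ through the square root $B^{-1}$ — this is precisely what the Balakrishnan integral together with the elementary resolvent intertwining are designed to accomplish, reducing the square-root statement to the everywhere-defined, directly verified resolvent identity.
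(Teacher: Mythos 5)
Your proof is correct, and it reaches the paper's conclusion by a genuinely different mechanism, although the overall architecture is shared. Like the paper, you first get $\bigl\Vert A(\sqrt{1+|A|^{2}})^{-1}\bigr\Vert\leq1$ from spectral calculus together with $\Vert A\phi\Vert=\Vert\,|A|\phi\Vert$ (the paper does this via the polar decomposition $A=U|A|$; same computation), and you finish, as the paper does, by noting that the bounded, everywhere defined operator $A(\sqrt{1+|A|^{2}})^{-1}$ is a closed extension of the densely defined $(\sqrt{1+|A^{*}|^{2}})^{-1}A$, so the closure identity follows from agreement on $D(A)$. The difference lies in how the intertwining at the level of resolvents is promoted to the inverse square roots. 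The paper proves by induction the inclusions $(1+AA^{*})^{-n}A\subseteq A(1+A^{*}A)^{-n}$ for every $n\in\mathbb{N}$, expands $\sqrt{1+x}$ in a binomial series at $x_{\epsilon}=-\epsilon y(1+y)^{-1}$, obtains bounded operator series $B_{1,\epsilon}$, $B_{2,\epsilon}$ satisfying $B_{1,\epsilon}A\subseteq AB_{2,\epsilon}$, and then lets $\epsilon\to1$ in the strong operator topology, using the closedness of $A$. You need the intertwining only for a single resolvent, $A(\mu+A^{*}A)^{-1}\phi=(\mu+AA^{*})^{-1}A\phi$ for $\phi\in D(A)$ --- which is exactly the paper's induction base, proved by the same elementary manipulation --- and you instead invoke the representation $T^{-1/2}=\frac{1}{\pi}\int_{0}^{\infty}\lambda^{-1/2}(\lambda+T)^{-1}\,\mathrm{d}\lambda$ together with Hille's theorem to pull the closed operator $A$ through the Bochner integral; your estimate $\Vert\tilde{R}_{\lambda}A\phi\Vert\leq(1+\lambda)^{-1}\Vert A\phi\Vert$ supplies the required integrability. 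What your route buys: no induction over powers of resolvents, no operator-valued binomial series, and no delicate $\epsilon\to1$ limiting argument. What the paper's route buys: it stays entirely within elementary spectral calculus and strong convergence of bounded operators, needing no vector-valued integration theory. Both arguments are complete and rigorous.
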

\begin{proof}
Let $A=U\left|A\right|$ with a partial isometry $U$, being in particular
a contraction, we have by the spectral theorem, $\left(\sqrt{1+\left|A\right|^{2}}\right)^{-1}\phi\in D(\left|A\right|)$
for all $\phi\in H_{0}$, and thus,
\begin{align*}
\left\Vert A\left(\sqrt{1+\left|A\right|^{2}}\right)^{-1}\phi\right\Vert  & =\left\Vert \left|A\right|\left(\sqrt{1+\left|A\right|^{2}}\right)^{-1}\phi\right\Vert \\
 & \leq\left\Vert \phi\right\Vert ,\qquad(\phi\in H_{0}),
\end{align*}
establishing the boundedness and the norm-estimate of the operator
$A\left(\sqrt{1+\left|A\right|^{2}}\right)^{-1}$. As the operator
$\left(\sqrt{1+\left|A^{*}\right|^{2}}\right)^{-1}A$ is densely defined,
for the asserted equality in the proposition, it suffices to establish
the inclusion 
\begin{equation}
\left(\sqrt{1+\left|A^{*}\right|^{2}}\right)^{-1}A\subseteq A\left(\sqrt{1+\left|A\right|^{2}}\right)^{-1}.\label{eq:incl}
\end{equation}
Next, we prove \eqref{eq:incl}: For this, by induction, we show the
inclusion
\begin{equation}
(1+AA^{*})^{-n}A\subseteq A(1+A^{*}A)^{-n}\quad(n\in\mathbb{N}).\label{eq:powers_of_res}
\end{equation}
For proving the latter inclusion for $n=1$, observe that for $\phi\in D(AA^{*}A)$,
we have
\[
(1+AA^{*})A\phi=A(1+A^{*}A)\phi.
\]
Hence, substituting $\psi\coloneqq(1+A^{*}A)\phi$, we get
\[
A(1+A^{*}A)^{-1}\psi=(1+AA^{*})^{-1}A\psi.
\]
So, for every $n\in\mathbb{N}$ the inductive step can be shown as
follows:
\begin{align*}
(1+AA^{*})^{-(n+1)}A & =(1+AA^{*})^{-n}(1+AA^{*})^{-1}A\\
 & \subseteq(1+AA^{*})^{-n}A(1+A^{*}A)^{-1}\\
 & \subseteq A(1+A^{*}A)^{-n}(1+A^{*}A)^{-1}\\
 & =A(1+A^{*}A)^{-(n+1)}.
\end{align*}
For the proof of \eqref{eq:incl}, we recall that for every real number
$x>0,$ with $|x|<1$ the binomial series gives
\begin{equation}
\sqrt{1+x}=\sum_{n=0}^{\infty}\binom{1/2}{n}x^{n}.\label{eq:bin}
\end{equation}
Putting $x_{\epsilon}\coloneqq-\epsilon y(1+y)^{-1}$ for some $y\geq0$
and $\epsilon\in]0,1[,$ we have $|x_{\epsilon}|\leq\epsilon,$$1+x_{\epsilon}=\left(1+(1-\epsilon)y\right)\left(1+y\right)^{-1}$,
which also leads to
\begin{equation}
\sqrt{1+x_{\epsilon}}=\sqrt{\left(1+(1-\epsilon)y\right)\left(1+y\right)^{-1}}\to\sqrt{\left(1+y\right)^{-1}}.\label{eq:conv}
\end{equation}
Moreover, plugging $x_{\epsilon}$ into the series \eqref{eq:incl},
we arrive at
\begin{align*}
\sqrt{1+x_{\epsilon}} & =\sum_{n=0}^{\infty}\binom{1/2}{n}x_{\epsilon}^{n}=\sum_{n=0}^{\infty}\binom{1/2}{n}\left(-\epsilon y(1+y)^{-1}\right)^{n}\\
 & =\sum_{n=0}^{\infty}\binom{1/2}{n}\left(\left(-\epsilon\right)^{n}y^{n}(1+y)^{-n}\right).
\end{align*}
By the functional calculus for selfadjoint operators, we may replace
in the latter expression $y$ by $A^{*}A$ and $AA^{*}$, respectively.
Thus, for $\epsilon\in]0,1[$, we set
\begin{align}
B_{1,\epsilon}\coloneqq & \sum_{n=0}^{\infty}\binom{1/2}{n}\left(\left(-\epsilon\right)^{n}\left(AA^{*}\right){}^{n}\left(1+\left(AA^{*}\right)\right)^{-n}\right),\nonumber \\
B_{2,\epsilon}\coloneqq & \sum_{n=0}^{\infty}\binom{1/2}{n}\left(\left(-\epsilon\right)^{n}\left(A^{*}A\right){}^{n}\left(1+\left(A^{*}A\right)\right)^{-n}\right).\label{eq:binom_op}
\end{align}
Note that $B_{1,\epsilon}$ and $B_{2,\epsilon}$ define bounded linear
operators. Moreover, by the spectral theorem (write $AA^{*}$ and
$A^{*}A$ as multiplication operators in a suitable $L^{2}$-space),
we get invoking \eqref{eq:conv}
\begin{equation}
B_{1,\epsilon}\to\sqrt{\left(1+AA^{*}\right)^{-1}}\text{ and }B_{2,\epsilon}\to\sqrt{\left(1+A^{*}A\right)^{-1}}\label{eq:conv_op}
\end{equation}
as $\epsilon\to1$ in the strong operator topology. Thus, for $\epsilon\in]0,1[$
we get with the help of \eqref{eq:powers_of_res} and \eqref{eq:binom_op}:
\begin{align*}
B_{1,\epsilon}A & =\sum_{n=0}^{\infty}\binom{1/2}{n}\left(\left(-\epsilon\right)^{n}\left(AA^{*}\right){}^{n}\left(1+\left(AA^{*}\right)\right)^{-n}\right)A\\
 & \subseteq\sum_{n=0}^{\infty}\binom{1/2}{n}\left(\left(-\epsilon\right)^{n}\left(AA^{*}\right){}^{n}A\left(1+\left(A^{*}A\right)\right)^{-n}\right)\\
 & =\sum_{n=0}^{\infty}\binom{1/2}{n}\left(\left(-\epsilon\right)^{n}A\left(A^{*}A\right)^{n}\left(1+\left(A^{*}A\right)\right)^{-n}\right)\\
 & =A\sum_{n=0}^{\infty}\binom{1/2}{n}\left(\left(-\epsilon\right)^{n}\left(A^{*}A\right)^{n}\left(1+\left(A^{*}A\right)\right)^{-n}\right)\\
 & =AB_{2,\epsilon}.
\end{align*}
Thus, the closedness of $A$, together with \eqref{eq:conv_op} yields
the asserted inclusion \eqref{eq:incl}. 
\end{proof}
Another fact used in the following is mentioned in the next proposition.
\begin{prop}
\label{prop:kappaA}Let $H_{0}$, $H_{1}$ Hilbert spaces, $A\colon D(A)\subseteq H_{0}\to H_{1}$
densely defined, closed, linear, $\kappa\in L(H_{1})$ with $0\in\rho(\kappa)$.
Then $\kappa A$ is densely defined and closed and we have 
\[
\left(\kappa A\right)^{*}=A^{*}\kappa^{*}.
\]
\end{prop}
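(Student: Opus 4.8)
The plan is to treat the three assertions---dense definedness, closedness, and the adjoint formula---separately, exploiting that $\kappa$ is everywhere defined and bounded with bounded inverse. First I would observe that, since $\kappa\in L(H_1)$ is defined on all of $H_1$, the domain of the composition is simply $D(\kappa A)=D(A)$. As $A$ is densely defined, so is $\kappa A$, and therefore $(\kappa A)^*$ exists and the statement about the adjoint is meaningful.

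For closedness I would invoke the hypothesis $0\in\rho(\kappa)$, i.e. $\kappa^{-1}\in L(H_1)$. Given a sequence $(x_n)$ in $D(A)$ with $x_n\to x$ in $H_0$ and $\kappa A x_n\to y$ in $H_1$, applying the bounded operator $\kappa^{-1}$ yields $A x_n=\kappa^{-1}(\kappa A x_n)\to\kappa^{-1}y$. Closedness of $A$ then forces $x\in D(A)$ and $Ax=\kappa^{-1}y$, whence $\kappa A x=y$, so $\kappa A$ is closed. I would stress that invertibility, not merely boundedness, of $\kappa$ is what drives this step; this is precisely where $0\in\rho(\kappa)$ is indispensable.

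For the adjoint identity the essential move is to shift $\kappa$ across the inner product. For $\phi\in H_1$ and $\eta\in H_0$ one has, for every $x\in D(A)$,
\[
\langle \kappa A x,\phi\rangle=\langle A x,\kappa^*\phi\rangle,
\]
so that $\phi\in D((\kappa A)^*)$ with $(\kappa A)^*\phi=\eta$ holds if and only if $\langle Ax,\kappa^*\phi\rangle=\langle x,\eta\rangle$ for all $x\in D(A)$. By the very definition of $A^*$, this is equivalent to $\kappa^*\phi\in D(A^*)$ together with $A^*\kappa^*\phi=\eta$, that is, to $\phi\in D(A^*\kappa^*)$ and $A^*\kappa^*\phi=\eta$. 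This simultaneously delivers the equality of domains $D((\kappa A)^*)=D(A^*\kappa^*)$ and the pointwise identity, giving $(\kappa A)^*=A^*\kappa^*$.

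None of the three steps is a serious obstacle; the only point demanding care is the adjoint computation. The inclusion $A^*\kappa^*\subseteq(\kappa A)^*$ is immediate and would survive for any everywhere-defined bounded $\kappa$, but to obtain \emph{equality} of the domains one must use that $\kappa^*$ is bounded and everywhere defined, so that boundedness of $x\mapsto\langle Ax,\kappa^*\phi\rangle$ on $D(A)$ is genuinely equivalent to $\kappa^*\phi\in D(A^*)$, leaving no room for a proper extension. I expect the verification that no domain is lost to be the most delicate part of an otherwise routine argument.
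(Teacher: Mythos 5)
Your proof is correct and follows essentially the same route as the paper: dense definedness is immediate from $D(\kappa A)=D(A)$, and closedness is obtained exactly as in the paper by applying the bounded inverse $\kappa^{-1}$ and invoking closedness of $A$. The only difference is that the paper dismisses the identity $\left(\kappa A\right)^{*}=A^{*}\kappa^{*}$ as ``easy'' without proof, whereas you supply the full (and correct) computation, including the observation that equality of domains, not just the inclusion $A^{*}\kappa^{*}\subseteq\left(\kappa A\right)^{*}$, comes from $\kappa^{*}$ being bounded and everywhere defined.
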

\begin{proof}
The operator $\kappa A$ is clearly densely defined. Moreover, if
$(\phi_{n})_{n}$ is a sequence in $D(A)$ such that $(\phi_{n})_{n}$
and $(\kappa A\phi_{n})_{n}$ are convergent to $\psi\in H_{0}$ and
$\eta\in H_{1}$, we infer, by the continuous invertibility of $\kappa$
and the closedness of $A$, $\psi\in D(A)$ and $A\psi=\kappa^{-1}\psi$.
Hence, $\kappa A$ is closed. The equality $\left(\kappa A\right)^{*}=A^{*}\kappa^{*}$
is also easy.
\end{proof}
Next, we briefly recall the functional analytic setting in which we
are going to discuss the two-temperature model later on. A more detailed
discussion can be found in \cite{Picard2009,key-4} or (particularly
concerning the time-derivative) in \cite{Kalauch2014}. See also \cite{Picard2005}. 
\begin{defn}
Let $\nu>0$, $H$ Hilbert space. Define $L_{\nu}^{2}\left(\mathbb{R},H\right)$
to be the space of (equivalence classes of) square integrable functions
$f\colon\mathbb{R}\to H$ with respect to the measure with Lebesgue
density $x\mapsto e^{-2\nu x}$. Denote the space of $L_{\nu}^{2}$-functions
$f$ with distributional derivative $f'$ representable as $L_{\nu}^{2}\left(\mathbb{R},H\right)$-function
by $H_{\nu,1}\left(\mathbb{R},H\right)$. Define
\[
\partial_{0}\colon H_{\nu,1}\left(\mathbb{R},H\right)\subseteq L_{\nu}^{2}\left(\mathbb{R},H\right)\to L_{\nu}^{2}\left(\mathbb{R},H\right),f\mapsto f'.
\]

\end{defn}
Note that we will not notationally distinguish between the time-derivative
realized as an operator in $L_{\nu}^{2}(\mathbb{R},H_{1})$ and $L_{\nu}^{2}(\mathbb{R},H_{2})$
for possibly different Hilbert spaces $H_{1}$ and $H_{2}.$ The reason
of introducing this particularly weighted $L^{2}$-space is the fact
that $\partial_{0}$ becomes a \emph{continuously invertible} operator.
In fact, one has $\|\partial_{0}^{-1}\|\leq1/\nu$, see \cite{Kalauch2014}.

For a closed and densely defined linear operator $C\colon D(C)\subseteq H_{0}\to H_{1}$
between the Hilbert spaces $H_{0}$ and $H_{1}$, the lifted operator
as an abstract multiplication operator from $L_{\nu}^{2}\left(\mathbb{R},H_{0}\right)$
to $L_{\nu}^{2}\left(\mathbb{R},H_{1}\right)$ will be denoted by
the same notation. With these conventions, we can come to (a special
case of) the solution theory first established in \cite{Picard2009}.
We mention here possible generalizations to non-autonomous (\cite{PTWW2013,Waurick2015})
or non-linear frameworks (\cite{Tr2012,TW2014}). Denoting the range
of an operator $M_{0}$ by $R\left(M_{0}\right)$ and its kernel by
$N\left(M_{0}\right)$ we recall the following general solution theory
result from \cite{Picard2009,key-4}.
\begin{thm}
\label{thm:Solthy}Let $H$ Hilbert space, $M_{0}=M_{0}^{*},M_{1}\in L(H)$,
$A\colon D(A)\subseteq H\to H$ skew-selfadjoint. Assume there exists
$c>0$ such that $\left\langle M_{0}\phi,\phi\right\rangle \geq c\langle\phi,\phi\rangle$
and $\Re\left\langle M_{1}\psi,\psi\right\rangle \geq c\langle\psi,\psi\rangle$
for all $\phi\in\overline{R(M_{0})}$, $\psi\in N(M_{0})$. Then there
exists $\nu_{0}\geq0$ such that for all $\nu>\nu_{0}$ the operator
sum
\[
\mathcal{B}\coloneqq\partial_{0}M_{0}+M_{1}+A
\]
is closable as an operator in $L_{\nu}^{2}\left(\mathbb{R},H\right)$
and the closure $\overline{\mathcal{B}}$ is continuously invertible
in $L_{\nu}^{2}\left(\mathbb{R},H\right)$. Moreover, $\overline{\mathcal{B}}^{-1}$
is \emph{causal} in the sense that given $f\in L_{\nu}^{2}\left(\mathbb{R},H\right)$
with the property that $f=0$ on $(-\infty,a]$ for some $a\in\mathbb{R}$,
then $\overline{\mathcal{B}}^{-1}f=0$ on $(-\infty,a]$.
\end{thm}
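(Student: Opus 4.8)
The plan is to derive continuous invertibility from a positive‑definiteness (accretivity) estimate for $\mathcal{B}$ \emph{together with} the same estimate for its adjoint, using the abstract fact that a densely defined operator $T$ satisfying $\Re\langle Tu,u\rangle\geq\gamma\langle u,u\rangle$ on a core, whose adjoint obeys the same bound, has a closure that is boundedly invertible with $\|\overline{T}^{-1}\|\leq1/\gamma$. Thus everything reduces to producing $\gamma>0$ (for $\nu$ large) with $\Re\langle\mathcal{B}u,u\rangle\geq\gamma\langle u,u\rangle$ on the natural core $\mathcal{D}\coloneqq H_{\nu,1}(\mathbb{R},H)\cap L_{\nu}^{2}(\mathbb{R},D(A))$, and the analogous bound for $\mathcal{B}^{*}$. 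Closability comes first and is cheap: since $M_{1}\in L(H)$ it suffices that $\partial_{0}M_{0}+A$ be closable, and this holds because its adjoint contains $\partial_{0}^{*}M_{0}-A$ on $\mathcal{D}$, hence is densely defined.

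For the core estimate I would use the skew‑selfadjointness $\Re\langle Au,u\rangle=0$ together with the weighted integration‑by‑parts relation $\partial_{0}+\partial_{0}^{*}=2\nu$ in $L_{\nu}^{2}$ and the fact that the constant‑coefficient $M_{0}=M_{0}^{*}$ commutes with $\partial_{0}$. These combine to give
\[
\Re\langle\partial_{0}M_{0}u,u\rangle=\nu\langle M_{0}u,u\rangle,
\]
whence
\[
\Re\langle\mathcal{B}u,u\rangle=\nu\langle M_{0}u,u\rangle+\Re\langle M_{1}u,u\rangle.
\]
Now invoke the hypotheses through the orthogonal decomposition $H=\overline{R(M_{0})}\oplus N(M_{0})$ (valid since $M_{0}$ is selfadjoint), with lifted projections $P$ onto $\overline{R(M_{0})}$ and $Q=1-P$ onto $N(M_{0})$. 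Because $M_{0}$ vanishes on $N(M_{0})$ and $\langle M_{0}\phi,\phi\rangle\geq c\langle\phi,\phi\rangle$ on $\overline{R(M_{0})}$, one gets $\nu\langle M_{0}u,u\rangle\geq c\nu\|Pu\|^{2}$; and since $\Re\langle M_{1}\psi,\psi\rangle\geq c\|\psi\|^{2}$ on $N(M_{0})$, the diagonal $Q$‑term is bounded below by $c\|Qu\|^{2}$.

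Expanding $u=Pu+Qu$, the remaining diagonal $P$‑term is controlled by $\|M_{1}\|\,\|Pu\|^{2}$ and the two cross terms by $2\|M_{1}\|\,\|Pu\|\,\|Qu\|$; a Young inequality absorbs the cross terms into $\tfrac{c}{2}\|Qu\|^{2}$ plus a multiple of $\|Pu\|^{2}$. Choosing $\nu_{0}$ so large that for $\nu>\nu_{0}$ the coefficient of $\|Pu\|^{2}$ exceeds $c/2$ yields $\Re\langle\mathcal{B}u,u\rangle\geq\tfrac{c}{2}\langle u,u\rangle$. The adjoint inherits this bound for free: $\partial_{0}^{*}$ has the same real part $\nu$, and $\Re\langle M_{1}^{*}u,u\rangle=\Re\langle M_{1}u,u\rangle$, so $\Re\langle\mathcal{B}^{*}u,u\rangle$ is literally the same expression $\nu\langle M_{0}u,u\rangle+\Re\langle M_{1}u,u\rangle$. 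Hence both $\overline{\mathcal{B}}$ and $\overline{\mathcal{B}}^{*}$ are bounded below by $c/2$, giving injectivity, closed range and dense range of $\overline{\mathcal{B}}$, so $\overline{\mathcal{B}}^{-1}\in L(L_{\nu}^{2}(\mathbb{R},H))$ with norm at most $2/c$. For causality I would exploit that $M_{0},M_{1}$ are constant in time and hence commute with $\partial_{0}$: under the Fourier–Laplace transform in time, $\partial_{0}$ becomes multiplication by $z=\mathrm{i}t+\nu$, and $\overline{\mathcal{B}}^{-1}$ becomes the fiberwise family $z\mapsto(zM_{0}+M_{1}+A)^{-1}$, holomorphic and uniformly bounded on the half‑plane $\Re z>\nu_{0}$; the Paley–Wiener theorem then forces causality (equivalently, one uses the uniform bound $\|\overline{\mathcal{B}}^{-1}\|\leq2/c$ for all $\nu>\nu_{0}$ and consistency of the solution operators across weights, cf.~\cite{key-4}).

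I expect the genuinely delicate points to be, first, the rigorous justification of the integration‑by‑parts identity for $\partial_{0}M_{0}$ (the commutation of $M_{0}$ with $\partial_{0}$ and the boundary behaviour in the weighted space), and, more importantly, verifying the accretivity bound for $\mathcal{B}^{*}$ on an \emph{actual} core, so that ``bounded below'' genuinely transfers to the closure and delivers surjectivity rather than merely a formal statement. The cross‑term absorption and the explicit choice of $\nu_{0}$ are routine once the real‑part identity is in place, while causality is the step that invokes the separate Paley–Wiener/consistency machinery.
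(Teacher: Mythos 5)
The paper itself does not prove Theorem \ref{thm:Solthy}; it recalls the result from \cite{Picard2009,key-4}, so your proposal has to be measured against the argument given there. In outline you follow the same route as that proof: closability via the densely defined formal adjoint, the real-part identity $\Re\langle\partial_{0}M_{0}u,u\rangle=\nu\langle M_{0}u,u\rangle$ (which implicitly uses $M_{0}\geq0$, a valid consequence of $M_{0}=M_{0}^{*}$, positivity on $\overline{R(M_{0})}$ and vanishing on $N(M_{0})$), the splitting along $H=\overline{R(M_{0})}\oplus N(M_{0})$ with Young's inequality absorbing the cross terms, and a Paley--Wiener/holomorphy argument for causality. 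All of that is correct and matches the cited strategy.

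The genuine gap is the step you flag yourself and then leave unresolved: surjectivity. Your accretivity bound for the adjoint is established only for the \emph{formal} adjoint $M_{0}\partial_{0}^{*}+M_{1}^{*}-A$ on the core $\mathcal{D}=H_{\nu,1}(\mathbb{R},H)\cap L_{\nu}^{2}(\mathbb{R},D(A))$. To conclude $R(\overline{\mathcal{B}})^{\perp}=N(\mathcal{B}^{*})=\{0\}$ you need the bound on the full domain $D(\mathcal{B}^{*})$, and this is not automatic: a priori the inclusion $\overline{\left(M_{0}\partial_{0}^{*}+M_{1}^{*}-A\right)\restricted{\mathcal{D}}}\subseteq\mathcal{B}^{*}$ may be strict, and an element of $N(\mathcal{B}^{*})$ need not be approximable in graph norm by elements of $\mathcal{D}$, so the observation that the adjoint is ``literally the same expression'' on $\mathcal{D}$ proves nothing about such an element. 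This is exactly where the cited proof does its real work: under the Fourier--Laplace transform, $\mathcal{B}$ is unitarily equivalent to multiplication by $t\mapsto(\mathrm{i}t+\nu)M_{0}+M_{1}+A$; each fiber is closed with domain $D(A)$ and, being a \emph{bounded} perturbation of the skew-selfadjoint $A$, has adjoint exactly $(-\mathrm{i}t+\nu)M_{0}+M_{1}^{*}-A$ on $D(A)$. Hence the accretivity estimates hold on the full fiber domains, every fiber is boundedly invertible with the uniform bound $2/c$, and inverting fiberwise yields $\overline{\mathcal{B}}^{-1}$ together with the half-plane holomorphy needed for causality. (Alternatively one can stay in the time picture and use regularizers such as $(1+\delta\partial_{0})^{-1}$, which commute with $M_{0}$, $M_{1}$ and $A$, to show that $\mathcal{D}$ is a core for $\mathcal{B}^{*}$.) Without one of these arguments your proof delivers only injectivity and closed range of $\overline{\mathcal{B}}$, not invertibility; the rest of your outline (cross-term absorption, choice of $\nu_{0}$, causality) is sound.
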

The latter theorem tells us that the non-homogeneous problem $\overline{\mathcal{B}}u=f$
admits a solution for all $f\in L_{\nu}^{2}\left(\mathbb{R},H\right)$
given $\nu$ sufficiently large. In \cite{key-4} it has been shown
how to invoke initial value problems in this context. Note that it
is also possible to show that the solution $u$ does \emph{not }depend
on the parameter $\nu$, that is, let $\mu,\nu>0$ be sufficiently
large then the solution operators $\overline{\mathcal{B}_{\nu}}^{-1}$
and $\overline{\mathcal{B}_{\mu}}^{-1}$ established in $L_{\nu}^{2}\left(\mathbb{R},H\right)$
and $L_{\mu}^{2}\left(\mathbb{R},H\right)$, respectively, coincide
on the intersection of the respective domain, that is, on $L_{\nu}^{2}\left(\mathbb{R},H\right)\cap L_{\mu}^{2}\left(\mathbb{R},H\right)$. 

Later on, we will also need the following operations $\mathrm{skew}:\mathbb{C}^{3\times3}\to\mathbb{C}^{3\times3},A\mapsto\frac{1}{2}\left(A-A^{T}\right)$
and $\mathrm{sym}:\mathbb{C}^{3\times3}\to\mathbb{C}^{3\times3},A\mapsto\frac{1}{2}\left(A+A^{T}\right)$.

\section{The two-temperature model\label{sec:2tt}}

In this section, we shall have a deeper look into the two-temperature
model found in \cite{Chen-Gurtin-1968}. For this, however, we have
to introduce several vector analytical operators. In the whole section,
we assume we are given an open set $\Omega\subseteq\mathbb{R}^{n}$.
\begin{defn}
We denote by $\overset{\circ}{C}_{\infty}\left(\Omega\right)$ the
set of smooth functions with compact support. Then, we define, as
usual, $\Grad\Phi$ to be the symmetric part of the\uline{ }$3\times3$-matrix-valued
derivative of a smooth vector field $\Phi$, $\grad\phi$ be the gradient
of a smooth function $\phi$ and $\Div\Psi$ and $\dive\psi$ be the
row-wise and the usual divergence for a smooth matrix-valued functions
$\Psi$ and a smooth vector-valued function $\psi,$ respectively.
Reusing the notation $\Grad,$ $\grad,\Div$ and $\dive$ for the
respective $L^{2}(\Omega)$-realizations, we further define $\overset{\circ}{\Grad}\;:=\overline{\Grad\Big|_{\overset{\circ}{C}_{\infty}\left(\Omega\right)^{3}}}$,
$\overset{\circ}{\Div}\;:=\overline{\Div|_{\mathrm{sym}\left[\overset{\circ}{C}_{\infty}\left(\Omega\right)^{3\times3}\right]}}$,
$\overset{\circ}{\grad}\;:=\overline{\grad\Big|_{\overset{\circ}{C}_{\infty}\left(\Omega\right)}}$,
$\overset{\circ}{\dive}\;:=\overline{\dive\Big|_{\overset{\circ}{C}_{\infty}\left(\Omega\right)^{3}}}$
and their respective $L^{2}\left(\Omega\right)$-type adjoints $-\Div\;:=\left(\Grad\Big|_{\overset{\circ}{C}_{\infty}\left(\Omega\right)^{3}}\right)^{*}$,
$-\Grad\;:=\left(\Div\Big|_{\mathrm{sym}\left[\overset{\circ}{C}_{\infty}\left(\Omega\right)^{3\times3}\right]}\right)^{*}$
, $-\dive\;:=\left(\grad\Big|_{\overset{\circ}{C}_{\infty}\left(\Omega\right)}\right)^{*}$,
$-\grad\;:=\left(\dive\Big|_{\overset{\circ}{C}_{\infty}\left(\Omega\right)^{3}}\right)^{*}$.
Note that here $\Div$ maps from and $\Grad$ maps into the Hilbert
space $L_{\mathrm{sym}}^{2}\left(\Omega\right)\coloneqq L^{2}\left(\Omega,\mathrm{sym}\left[\mathbb{C}^{3\times3}\right]\right)$
of $3\times3$-symmetric-matrix valued $L^{2}$-type mappings.
\end{defn}
In the so-called two-temperature models of Chen and Gurtin \cite{Chen-Gurtin-1968},
apart from the temperature $\theta$ another temperature $\phi$,
the conductive temperature, is introduced (together with a reference
temperature $T_{\text{0 }}\in\oi0\infty$) such that
\begin{equation}
\theta-\left(\phi-T_{0}\right)=-\alpha\dive q.\label{eq:2}
\end{equation}
Here $\alpha\in\oi0\infty$ is a parameter, called the \emph{two-temperature
parameter}. Assuming homogeneous Dirichlet boundary conditions, Fourier's
law is then formulated in terms of the conductive temperature as
\begin{align}
q & =-\kappa\interior\grad\left(\phi-T_{0}\right),\label{eq:Fou}
\end{align}
where $\kappa\in L(L^{2}(\Omega)^{3})$ is a selfadjoint operator
with $\kappa\geq c>0$. In addition, the two-temperature system consists
of the heat equation with mass density $\rho_{0}\in L^{\infty}(\Omega),\,\rho_{0}\geq c_{0}>0$,
that is, 
\begin{align*}
\partial_{0}\left(\rho_{0}T_{0}\eta\right)+\dive q & =\rho_{0}Q
\end{align*}
or -- for our purposes -- more conveniently
\begin{align}
\partial_{0}\left(\rho_{0}\eta\right)+\dive\left(q/T_{0}\right) & =\rho_{0}Q/T_{0},\label{eq:heat_eq}
\end{align}
where $q$ is the heat flux as in \eqref{eq:Fou}, $\eta$ is the
entropy and $Q$ is the heat source. For the entropy $\eta$ we have
the material law relating the entropy to the temperature $\theta$
and the strain tensor $\mathcal{E}=\interior\Grad u,$ $u$ the displacement,
\begin{align}
\rho_{0}T_{0}\eta & =\rho_{0}\lambda\theta+T_{0}\gamma^{*}\mathcal{E}\label{eq:entropy}
\end{align}
for some scalar $\lambda>0,$ an operator $\gamma\in L(L^{2}(\Omega),L_{\mathrm{sym}}^{2}(\Omega))$.
Next, the strain tensor $\mathcal{E}=\interior\Grad u$ is related
to the stress tensor $\sigma$ and the temperature via the elasticity
tensor $C=C^{*}\in L\left(L_{\mathrm{sym}}^{2}(\Omega)\right)$ being
strictly positive definite and $\gamma$ in the following way
\begin{equation}
\mathcal{E}=C^{-1}\sigma+C^{-1}\gamma\theta.\label{eq:stress-strain}
\end{equation}
The two-temperature model is completed by the balance of momentum
\begin{equation}
\rho_{0}\partial_{0}^{2}u-\Div\sigma=\rho_{0}F\label{eq:balance_of_mom}
\end{equation}
for some given external force $F$.

In the following, we will show that Theorem \ref{thm:Solthy} is applicable
to the equations \eqref{eq:2}, \eqref{eq:Fou}, \eqref{eq:heat_eq},
\eqref{eq:entropy}, \eqref{eq:stress-strain}, \eqref{eq:balance_of_mom}.
Hence, the Hilbert space setting introduced in the previous section
provides a functional analytic framework such that for all right hand
sides $F$ and $Q$ there exists a unique solution to the two-temperature
model depending continuously on $F$ and $Q$. So, the task to be
solved in the next lines is to find the right unknowns and, hence,
the right operators $M_{0}$, $M_{1}$ and $A$ making Theorem \ref{thm:Solthy}
applicable.

It should be noted that our reformulation of the two-temperature model
reveals that the introduction of the second temperature transforms
the heat equation into an \emph{ordinary} differential equation with
infinite dimensional state space.

A first step towards our main goal in this section is the following
observation yielded by \eqref{eq:2} and \eqref{eq:Fou}:
\begin{prop}
\label{prop:q_theta}Let $\kappa=\kappa^{*}\in L(L^{2}(\Omega)^{3})$
be strictly positive definite. Assume that $T_{\text{0 }},\alpha\in\oi0\infty$
and $\theta\in L^{2}(\Omega)$ and $q\in D(\dive),$ $\phi\in D(\interior\grad)$
satisfy \eqref{eq:2} and \eqref{eq:Fou}. Then with%
\footnote{Of course here $\sqrt{\alpha}\kappa\sqrt{\alpha}=\alpha\kappa$, but
we prefer to write it in this more symmetric fashion, since in the
eventual first order model equations $\alpha$ can be chosen more
generally, i.e. as a continuous, selfadjoint, strictly positive definite
operator, without affecting well-posedness. Also $\kappa$ will be
allowed to be a continuous, selfadjoint, strictly positive definite
operator.%
} $\kappa_{\alpha}\coloneqq\sqrt{\alpha}\kappa\sqrt{\alpha}$ we have
\begin{align}
\sqrt{1-\sqrt{\kappa_{\alpha}}\interior\grad\dive\sqrt{\kappa_{\alpha}}}\sqrt{\kappa}^{-1}q & =-\sqrt{\kappa}\interior\grad\sqrt{1-\dive\kappa_{\alpha}\interior\grad}^{-1}\theta.\label{eq:FouFou}
\end{align}
\end{prop}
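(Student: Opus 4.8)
The plan is to eliminate the auxiliary unknowns $q$ and $\phi$, to rewrite both sides of \eqref{eq:FouFou} through the single closed operator $C\coloneqq\sqrt{\kappa_{\alpha}}\interior\grad$, and then to reduce the claimed equality to the commutation relation of Proposition \ref{prop:comm_with_sqr}.

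First I would record the algebraic meaning of the two square-root expressions. Since $\kappa_{\alpha}=\alpha\kappa$ is bounded, selfadjoint and boundedly invertible, so is $\sqrt{\kappa_{\alpha}}$, and Proposition \ref{prop:kappaA} applies to $C=\sqrt{\kappa_{\alpha}}\interior\grad$, giving that $C$ is densely defined and closed with $C^{*}=(\interior\grad)^{*}\sqrt{\kappa_{\alpha}}=-\dive\sqrt{\kappa_{\alpha}}$. Consequently $C^{*}C=-\dive\kappa_{\alpha}\interior\grad$ and $CC^{*}=-\sqrt{\kappa_{\alpha}}\interior\grad\dive\sqrt{\kappa_{\alpha}}$, so that the operator under the square root on the right of \eqref{eq:FouFou} is $1-\dive\kappa_{\alpha}\interior\grad=1+C^{*}C=1+|C|^{2}$, while the one on the left is $1-\sqrt{\kappa_{\alpha}}\interior\grad\dive\sqrt{\kappa_{\alpha}}=1+CC^{*}=1+|C^{*}|^{2}$. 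In particular both are strictly positive, so their square roots and the inverse appearing in \eqref{eq:FouFou} are well defined.

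Next I would eliminate the unknowns. Writing $\psi\coloneqq\phi-T_{0}$, Fourier's law \eqref{eq:Fou} reads $q=-\kappa\interior\grad\psi$; substituting this into \eqref{eq:2} and eliminating $q$ expresses $\theta$ through $\psi$ as $\theta=(1-\dive\kappa_{\alpha}\interior\grad)\psi=(1+C^{*}C)\psi$, hence $\psi=(1+C^{*}C)^{-1}\theta$. Using $\sqrt{\kappa_{\alpha}}=\sqrt{\alpha}\sqrt{\kappa}$, so that $\sqrt{\kappa}\interior\grad=\tfrac{1}{\sqrt{\alpha}}C$, I would then compute $(\sqrt{\kappa})^{-1}q=-\sqrt{\kappa}\interior\grad\psi=-\tfrac{1}{\sqrt{\alpha}}C(1+C^{*}C)^{-1}\theta$. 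Feeding these into \eqref{eq:FouFou}, the left-hand side becomes $-\tfrac{1}{\sqrt{\alpha}}\sqrt{1+CC^{*}}\,C(1+C^{*}C)^{-1}\theta$ and the right-hand side becomes $-\tfrac{1}{\sqrt{\alpha}}C(1+C^{*}C)^{-1/2}\theta$. Cancelling the common scalar factor and the free element $\theta$, the assertion reduces to the operator identity $\sqrt{1+CC^{*}}\,C\,(1+C^{*}C)^{-1}=C(1+C^{*}C)^{-1/2}$, i.e., after multiplying on the right by $(1+C^{*}C)^{1/2}$, to $\sqrt{1+CC^{*}}\,C\,(1+C^{*}C)^{-1/2}=C$.

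Finally, this is exactly where Proposition \ref{prop:comm_with_sqr} enters: applied to $C$ it yields $C(1+C^{*}C)^{-1/2}=(1+CC^{*})^{-1/2}C$ (the continuous extension of $(1+CC^{*})^{-1/2}C$), whence $\sqrt{1+CC^{*}}\,C\,(1+C^{*}C)^{-1/2}=\sqrt{1+CC^{*}}\,(1+CC^{*})^{-1/2}C=C$, using that $\sqrt{1+CC^{*}}\,(1+CC^{*})^{-1/2}$ acts as the identity on the range of $(1+CC^{*})^{-1/2}$. I expect the main obstacle to lie in the bookkeeping with unbounded operators rather than in any deep idea: one must check that the substitutions are legitimate on the stated domains (that $\psi\in D(\interior\grad)$ and $q\in D(\dive)$), that the formal products of resolvents and square roots act on the correct domains, and in particular that the commutation step is invoked in the precise closed-operator form provided by Proposition \ref{prop:comm_with_sqr}; once that is secured, the remainder is routine rearrangement.
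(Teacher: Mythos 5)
Your proposal is correct and is essentially the paper's own proof: both arguments eliminate $\phi$ via the boundedly invertible operator $1-\dive\kappa_{\alpha}\interior\grad=1+C^{*}C$ with $C=\sqrt{\kappa_{\alpha}}\interior\grad$ (closedness and the adjoint formula coming from Proposition \ref{prop:kappaA}), and then invoke Proposition \ref{prop:comm_with_sqr} for this same $C$ to commute the square-root resolvent past $C$. The only cosmetic difference is that you phrase the final step as an operator identity verified on $D(C)$ after multiplying by $(1+C^{*}C)^{1/2}$, whereas the paper applies the inclusion directly to the rewritten Fourier law; the mechanism is identical.
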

\begin{proof}
Plugging in Fourier's law we can rewrite \eqref{eq:2} as
\begin{equation}
\theta=\left(1-\dive\kappa_{\alpha}\interior\grad\right)\left(\phi-T_{0}\right).\label{eq:2-1}
\end{equation}
The operator $\sqrt{\kappa_{\alpha}}\interior\grad:D(\interior\grad)\subseteq L^{2}(\Omega)\to L^{2}(\Omega)^{3}$
is a closed densely defined linear operator, since $\kappa$ and hence,
$\sqrt{\kappa_{\alpha}}$ are boundedly invertible, see Proposition
\ref{prop:kappaA}. Moreover, its adjoint is given by $\left(\interior\grad\right)^{\ast}\sqrt{\kappa_{\alpha}}=-\dive\sqrt{\kappa_{\alpha}}$
(Proposition \ref{prop:kappaA}) and thus, $-\dive\kappa_{\alpha}\interior\grad$
is a selfadjoint, non-negative operator. In particular, $1-\dive\kappa_{\alpha}\interior\grad$
is boundedly invertible. Hence, rephrasing \eqref{eq:Fou} in terms
of the temperature $\theta$ we are led to
\begin{align*}
q & =-\kappa\interior\grad\left(1-\dive\kappa_{\alpha}\interior\grad\right)^{-1}\left(1-\dive\kappa_{\alpha}\interior\grad\right)\left(\phi-T_{0}\right)\\
 & =-\kappa\interior\grad\left(1-\dive\kappa_{\alpha}\interior\grad\right)^{-1}\theta.
\end{align*}
Next, applying Proposition \ref{prop:comm_with_sqr} to $A\coloneqq\sqrt{\kappa_{\alpha}}\interior\grad$
we infer
\[
\sqrt{1-\sqrt{\kappa_{\alpha}}\interior\grad\dive\sqrt{\kappa_{\alpha}}}^{-1}\sqrt{\kappa_{\alpha}}\interior\grad\subseteq\sqrt{\kappa_{\alpha}}\interior\grad\sqrt{1-\dive\kappa_{\alpha}\interior\grad}^{-1}
\]
and
\[
\sqrt{1-\dive\kappa_{\alpha}\interior\grad}^{-1}\dive\sqrt{\kappa_{\alpha}}\subseteq\dive\sqrt{\kappa_{\alpha}}\sqrt{1-\sqrt{\kappa_{\alpha}}\interior\grad\dive\sqrt{\kappa_{\alpha}}}^{-1},
\]
which leads us to rewrite Fourier's law as
\begin{align*}
 & \sqrt{\sqrt{\alpha}^{-1}\kappa\sqrt{\alpha}^{-1}}^{-1}q=\\
 & =-\sqrt{\kappa_{\alpha}}\interior\grad\sqrt{1-\dive\kappa_{\alpha}\interior\grad}^{-1}\sqrt{1-\dive\kappa_{\alpha}\interior\grad}^{-1}\theta\\
 & =-\sqrt{1-\sqrt{\kappa_{\alpha}}\interior\grad\dive\sqrt{\kappa_{\alpha}}}^{-1}\sqrt{\kappa_{\alpha}}\interior\grad\sqrt{1-\dive\kappa_{\text{\ensuremath{\alpha}}}\interior\grad}^{-1}\theta,
\end{align*}
yielding the assertion.
\end{proof}
With the latter observation, we are in the position of rewriting the
two-temperature model as a system in the spirit of Theorem \ref{thm:Solthy}:
\begin{thm}
\label{thm:wp2tt}Let $\kappa=\kappa^{*}\in L(L^{2}(\Omega)^{3}),$
$C=C^{*}\in L(L_{\mathrm{sym}}^{2}(\Omega))$, $\gamma\in L(L_{\mathrm{sym}}^{2}(\Omega),L^{2}(\Omega))$,
$\rho_{0}\in L(L^{2}(\Omega)),$ $\lambda,\alpha,T_{0}\in\oi0\infty$.
Then the system \eqref{eq:2}, \eqref{eq:Fou}, \eqref{eq:heat_eq},
\eqref{eq:entropy}, \eqref{eq:stress-strain}, \eqref{eq:balance_of_mom}
may be rewritten into 
\begin{equation}
\left(\partial_{0}M_{0}+M_{1}+A\right)U=J\label{eq:final_system}
\end{equation}
with $\partial_{0}u=v$ and 
\[
U=\left(\begin{array}{c}
v\\
\sigma\\
\theta\\
\sqrt{1-\sqrt{\kappa_{\alpha}}\interior\grad\dive\sqrt{\kappa_{\alpha}}}\sqrt{\kappa}^{-1}q/T_{0}
\end{array}\right),\quad J=\left(\begin{array}{c}
\rho_{0}F\\
0\\
\rho_{0}Q/T_{0}\\
0
\end{array}\right),
\]
where $\sqrt{\alpha}\kappa\sqrt{\alpha}=\kappa_{\alpha}$, 
\[
M_{0}=\left(\begin{array}{cccc}
\rho_{0} & 0 & 0 & 0\\
0 & C^{-1} & C^{-1}\gamma & 0\\
0 & \gamma^{*}C^{-1} & \left(\rho_{0}T_{0}^{-1}\lambda+\gamma^{*}C^{-1}\gamma\right) & 0\\
0 & 0 & 0 & 0
\end{array}\right),\quad A=\left(\begin{array}{cccc}
0 & -\Div & 0 & 0\\
-\interior\Grad & 0 & 0 & 0\\
0 & 0 & 0 & 0\\
0 & 0 & 0 & 0
\end{array}\right),
\]
and 
\begin{align*}
M_{1} & =\left(\begin{array}{cccc}
0 & 0 & 0 & 0\\
0 & 0 & 0 & 0\\
0 & 0 & 0 & -M_{1,32}^{*}\\
0 & 0 & M_{1,32} & T_{0}
\end{array}\right)\\
M_{1,32} & =\overline{\sqrt{1-\sqrt{\kappa_{\alpha}}\interior\grad\dive\sqrt{\kappa_{\alpha}}}^{-1}\sqrt{\kappa}\interior\grad}\\
 & =\sqrt{\kappa}\interior\grad\sqrt{1-\dive\kappa_{\alpha}\interior\grad}^{-1}.
\end{align*}
In particular, there exists $\nu_{0}\geq0$ such that for all $\nu>\nu_{0}$
the equation in \eqref{eq:final_system} admits for every $J\in L_{\nu}^{2}\left(\mathbb{R},L^{2}(\Omega)^{3}\oplus L^{2}(\Omega)^{3\times3}\oplus L^{2}(\Omega)\oplus L^{2}(\Omega)^{3}\right)$
a unique solution $U\in D\left(\overline{\partial_{0}M_{0}+M_{1}+A}\right)\subseteq L_{\nu}^{2}\left(\mathbb{R},L^{2}(\Omega)^{3}\oplus L^{2}(\Omega)^{3\times3}\oplus L^{2}(\Omega)\oplus L^{2}(\Omega)^{3}\right)$.
The solution operator is continuous and causal.\end{thm}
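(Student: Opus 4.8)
The plan is to prove the two claims in turn: first that the six model equations are equivalent to the first order system \eqref{eq:final_system} with the stated $M_0,M_1,A$, and then that these operators meet the hypotheses of Theorem \ref{thm:Solthy}.

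For the reformulation I would expand $\left(\partial_0 M_0+M_1+A\right)U=J$ row by row and match each line with a model equation. Row one reads $\partial_0\rho_0 v-\Div\sigma=\rho_0 F$; since $v=\partial_0 u$ and $\rho_0$ (acting pointwise in time) commutes with $\partial_0$, this is the balance of momentum \eqref{eq:balance_of_mom}. Row two reads $\partial_0\left(C^{-1}\sigma+C^{-1}\gamma\theta\right)-\interior\Grad v=0$, and using $\interior\Grad v=\partial_0\interior\Grad u=\partial_0\mathcal E$ together with the injectivity of $\partial_0$ on $L_\nu^2$ this is equivalent to \eqref{eq:stress-strain}. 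For row three I would substitute \eqref{eq:stress-strain} into \eqref{eq:entropy} to obtain $\rho_0\eta=\gamma^{*}C^{-1}\sigma+\left(\rho_0 T_0^{-1}\lambda+\gamma^{*}C^{-1}\gamma\right)\theta$, so that the $\partial_0M_0$-part of that row reproduces $\partial_0(\rho_0\eta)$.

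The definition of the fourth unknown is fixed by row four, $M_{1,32}\theta+T_0\tilde q=0$, which is exactly \eqref{eq:FouFou} of Proposition \ref{prop:q_theta} divided by $T_0$. It then remains to identify the leftover term $-M_{1,32}^{*}\tilde q$ in row three with $\dive(q/T_0)$, so that row three becomes the heat equation \eqref{eq:heat_eq}. I would do this by computing, via Proposition \ref{prop:kappaA} and $(\interior\grad)^{*}=-\dive$, the adjoint $M_{1,32}^{*}=-\dive\sqrt\kappa\,\sqrt{1-\dive\kappa_{\alpha}\interior\grad}^{-1}$ and substituting the definition $\tilde q=\sqrt{1-\sqrt{\kappa_{\alpha}}\interior\grad\dive\sqrt{\kappa_{\alpha}}}\,\sqrt\kappa^{-1}q/T_0$; the two mutually inverse square-root factors together with $\sqrt\kappa\,\sqrt\kappa^{-1}$ cancel, leaving precisely $\dive(q/T_0)$. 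This algebraic cancellation, with its careful bookkeeping of adjoints and closures of the unbounded square-root operators, is the step I expect to be the main obstacle.

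Finally I would verify the hypotheses of Theorem \ref{thm:Solthy}. Selfadjointness $M_0=M_0^{*}$ is immediate from $C=C^{*}$, the selfadjointness of $\rho_0$, and the symmetric placement of $\gamma,\gamma^{*}$. Boundedness of $M_1$ reduces to that of $M_{1,32}$: writing $A:=\sqrt{\kappa_{\alpha}}\interior\grad$ one has $|A|^2=-\dive\kappa_{\alpha}\interior\grad$ and, using $\sqrt{\kappa_{\alpha}}=\sqrt\alpha\sqrt\kappa$, $M_{1,32}=\tfrac1{\sqrt\alpha}A\left(\sqrt{1+|A|^2}\right)^{-1}$, which is bounded with norm $\le 1/\sqrt\alpha$ by Proposition \ref{prop:comm_with_sqr}; hence $M_1\in L(H)$. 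Skew-selfadjointness of $A$ follows from $(\interior\Grad)^{*}=-\Div$, making the $(v,\sigma)$-block skew-selfadjoint while the remaining blocks vanish. For the positivity conditions, a completion of squares gives for the middle $2\times2$ block $\langle M_0^{\mathrm{mid}}(x,y),(x,y)\rangle=\langle C^{-1}(x+\gamma y),x+\gamma y\rangle+\rho_0 T_0^{-1}\lambda\|y\|^2$, which with $C^{-1}\ge c'>0$ and $\rho_0\ge c_0>0$ is strictly positive definite; thus the middle block is boundedly invertible, $N(M_0)=\{0\}\oplus\{0\}\oplus\{0\}\oplus L^2(\Omega)^3$ and, by selfadjointness, $\overline{R(M_0)}=N(M_0)^{\perp}$ consists of the vectors with vanishing fourth component. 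On $\overline{R(M_0)}$ the operator $M_0$ reduces to $\rho_0$ and the strictly positive middle block, so $\langle M_0\phi,\phi\rangle\ge c\langle\phi,\phi\rangle$; on $N(M_0)$, writing $\psi=(0,0,0,\psi_4)$, the off-diagonal entries $M_{1,32},-M_{1,32}^{*}$ pair against vanishing components and drop out, leaving $\Re\langle M_1\psi,\psi\rangle=T_0\|\psi\|^2$. With both estimates in hand, Theorem \ref{thm:Solthy} yields the threshold $\nu_0$, unique solvability for every admissible $J$, and continuity and causality of the solution operator.
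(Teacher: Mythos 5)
Your proposal follows essentially the same route as the paper's own proof, in both of its parts: the row-by-row identification of \eqref{eq:final_system} with the model equations (substituting \eqref{eq:stress-strain} into \eqref{eq:entropy} for the third row, and invoking Proposition \ref{prop:q_theta} for the fourth row), and the verification of the hypotheses of Theorem \ref{thm:Solthy} -- boundedness of $M_{1,32}$ with norm at most $1/\sqrt{\alpha}$ via Proposition \ref{prop:comm_with_sqr}, skew-selfadjointness of $A$, strict positivity of $M_{0}$ on its range, and $\Re\langle M_{1}\psi,\psi\rangle=T_{0}\|\psi\|^{2}$ on $N(M_{0})$. Your completion of squares for the middle $2\times2$ block is exactly the paper's symmetric Gauss elimination with the congruence matrix $S$, so there is no genuine difference of method there.

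There is, however, one concrete error, and it sits precisely in the step you yourself flag as the main obstacle. The formula
\[
M_{1,32}^{*}=-\dive\sqrt{\kappa}\,\sqrt{1-\dive\kappa_{\alpha}\interior\grad}^{-1}
\]
cannot be correct: $\sqrt{1-\dive\kappa_{\alpha}\interior\grad}^{-1}$ acts on $L^{2}(\Omega)$ (scalar functions), whereas $M_{1,32}^{*}$ must act on $L^{2}(\Omega)^{3}$; in particular this factor is \emph{not} inverse to the factor $\sqrt{1-\sqrt{\kappa_{\alpha}}\interior\grad\dive\sqrt{\kappa_{\alpha}}}$ occurring in the fourth unknown, so the cancellation you describe does not go through as written (the composition is not even defined). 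The fix is short: starting from the closure form of the theorem, $M_{1,32}=\overline{\sqrt{1-\sqrt{\kappa_{\alpha}}\interior\grad\dive\sqrt{\kappa_{\alpha}}}^{-1}\sqrt{\kappa}\interior\grad}$, use that the adjoint of a closure equals the adjoint of the operator and that $(BT)^{*}=T^{*}B^{*}$ for $B$ bounded and $T$ densely defined, to get
\[
M_{1,32}^{*}=\left(\sqrt{\kappa}\interior\grad\right)^{*}\sqrt{1-\sqrt{\kappa_{\alpha}}\interior\grad\dive\sqrt{\kappa_{\alpha}}}^{-1}=-\dive\sqrt{\kappa}\,\sqrt{1-\sqrt{\kappa_{\alpha}}\interior\grad\dive\sqrt{\kappa_{\alpha}}}^{-1},
\]
where $\left(\interior\grad\right)^{*}\sqrt{\kappa}=-\dive\sqrt{\kappa}$ by Proposition \ref{prop:kappaA}; the square root is that of the operator acting on \emph{vector fields}. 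With this correction one indeed obtains
\[
-M_{1,32}^{*}\,\sqrt{1-\sqrt{\kappa_{\alpha}}\interior\grad\dive\sqrt{\kappa_{\alpha}}}\sqrt{\kappa}^{-1}q/T_{0}=\dive\left(q/T_{0}\right),
\]
which is exactly the identity the paper records at the end of its proof. With that local repair, the rest of your argument stands and coincides with the paper's.
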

\begin{proof}
Before computing that the equation $\left(\partial_{0}M_{0}+M_{1}+A\right)U=J$
is a reformulation of the two-temperature model, we establish the
well-posedness issue first. For this, note that $M_{0}=M_{0}^{*}$
and $A=-A^{*}.$ Next, we check that $M_{0}$ is strictly positive
definite on its range. For the purpose of symmetric Gauss elimination,
we define the transformation matrix
\[
S\coloneqq\left(\begin{array}{cccc}
1 & 0 & 0 & 0\\
0 & 1 & \gamma & 0\\
0 & 0 & 1 & 0\\
0 & 0 & 0 & 1
\end{array}\right).
\]
Hence, 
\[
S^{-1}=\left(\begin{array}{cccc}
1 & 0 & 0 & 0\\
0 & 1 & -\gamma & 0\\
0 & 0 & 1 & 0\\
0 & 0 & 0 & 1
\end{array}\right),\quad S^{*}=\left(\begin{array}{cccc}
1 & 0 & 0 & 0\\
0 & 1 & 0 & 0\\
0 & \gamma^{*} & 1 & 0\\
0 & 0 & 0 & 1
\end{array}\right),\quad\left(S^{-1}\right)^{*}=\left(\begin{array}{cccc}
1 & 0 & 0 & 0\\
0 & 1 & 0 & 0\\
0 & -\gamma^{*} & 1 & 0\\
0 & 0 & 0 & 1
\end{array}\right).
\]
We compute that
\begin{align*}
 & \left(S^{-1}\right)^{*}M_{0}S^{-1}\\
 & =\left(\begin{array}{cccc}
1 & 0 & 0 & 0\\
0 & 1 & 0 & 0\\
0 & -\gamma^{*} & 1 & 0\\
0 & 0 & 0 & 1
\end{array}\right)\left(\begin{array}{cccc}
\rho_{0} & 0 & 0 & 0\\
0 & C^{-1} & C^{-1}\gamma & 0\\
0 & \gamma^{*}C^{-1} & \left(\rho_{0}T_{0}^{-1}\lambda+\gamma^{*}C^{-1}\gamma\right) & 0\\
0 & 0 & 0 & 0
\end{array}\right)\left(\begin{array}{cccc}
1 & 0 & 0 & 0\\
0 & 1 & -\gamma & 0\\
0 & 0 & 1 & 0\\
0 & 0 & 0 & 1
\end{array}\right)\\
 & =\left(\begin{array}{cccc}
\rho_{0} & 0 & 0 & 0\\
0 & C^{-1} & 0 & 0\\
0 & 0 & \rho_{0}T_{0}^{-1}\lambda & 0\\
0 & 0 & 0 & 0
\end{array}\right).
\end{align*}
Next, as bijective transformation $S$ reduces the space 
\[
R\coloneqq L^{2}(\Omega)^{3}\oplus L^{2}(\Omega)^{3\times3}\oplus L^{2}(\Omega)\oplus\{0\},
\]
 we infer $R(M_{0})=R$. Moreover, for $\phi\in R$ we compute
\begin{align*}
\langle M_{0}\phi,\phi\rangle & =\langle M_{0}S^{-1}S\phi,S^{-1}S\phi\rangle\\
 & =\langle\left(S^{-1}\right)^{*}M_{0}S^{-1}S\phi,S\phi\rangle\\
 & \geq\tilde{c}\langle\phi,\phi\rangle
\end{align*}
for some $\tilde{c}>0$. On $N(M_{0}),$ the operator $\Re M_{1}$,
the real part of $M_{1},$ is given by multiplication by $T_{0}>0$.
Hence, the assertion concerning well-posedness follows, once we have
established that $M_{1}$ defines a bounded linear operator. This,
however, is a direct consequence of Proposition \ref{prop:comm_with_sqr}:
Indeed, 
\begin{align*}
 & \left|\sqrt{\kappa}\interior\grad\sqrt{1-\dive\kappa_{\alpha}\interior\grad}^{-1}\phi\right|_{0}\\
 & =\frac{1}{\sqrt{\alpha}}\left|\left|\sqrt{\kappa_{\alpha}}\interior\grad\right|\sqrt{1+\left|\sqrt{\kappa_{\alpha}}\interior\grad\right|^{2}}^{-1}\phi\right|_{0}\\
 & \leq\frac{1}{\sqrt{\alpha}}\left|\phi\right|_{0}\quad\left(\phi\in L^{2}(\Omega)\right).
\end{align*}

As a next step we proceed in showing that the two-temperature model
admits the asserted reformulation. For this, in turn, it suffices
to observe the following consequence of the equations \eqref{eq:entropy}
and \eqref{eq:stress-strain}:
\begin{align*}
\rho_{0}T_{0}\eta & =\rho_{0}\lambda\theta+T_{0}\gamma^{*}\mathcal{E}\\
 & =\rho_{0}\lambda\theta+T_{0}\gamma^{*}\left(C^{-1}\sigma+C^{-1}\gamma\theta\right).
\end{align*}
Hence, 
\[
\rho_{0}\eta=\left(\rho_{0}T_{0}^{-1}\lambda+\gamma^{*}C^{-1}\gamma\right)\theta+\gamma^{*}C^{-1}\sigma.
\]
Moreover, from $\mathcal{E}=\interior\Grad u$ and $\partial_{0}u=v$
it follows that
\begin{align*}
\partial_{0}\mathcal{E}-\interior\Grad v & =0
\end{align*}
 and the balance of momentum \eqref{eq:balance_of_mom} reads as
\[
\rho_{0}\partial_{0}v-\Div\sigma=\rho_{0}F.
\]
Recalling \eqref{eq:FouFou} from Proposition \ref{prop:q_theta},
we note that{\scriptsize{}
\begin{align*}
 & \dive\left(q/T_{0}\right)=\\
 & =\dive\sqrt{\kappa}\sqrt{1-\sqrt{\kappa_{\alpha}}\interior\grad\dive\sqrt{\kappa_{\alpha}}}^{-1}\left(\sqrt{1-\sqrt{\kappa_{\alpha}}\interior\grad\dive\sqrt{\kappa_{\alpha}}}\sqrt{\kappa}^{-1}q/T_{0}\right),
\end{align*}
}which eventually establishes the assertion.%

\end{proof}
Note that $M_{1,32}$ has moved from its place in $A$ for the limit
case $\alpha=0$ to the material law.
\begin{rem}
Symbolizing non-vanishing entries in the block operator matrices under
consideration by $\bigstar$, clearly, the pattern of $M_{0}$ is
\[
M_{0}=\left(\begin{array}{cccc}
\bigstar & 0 & 0 & 0\\
0 & \bigstar & \bigstar & 0\\
0 & \bigstar & \bigstar & 0\\
0 & 0 & 0 & 0
\end{array}\right).
\]
But the pattern of $M_{1}$ is
\[
\Re M_{1}=\left(\begin{array}{cccc}
0 & 0 & 0 & 0\\
0 & 0 & 0 & 0\\
0 & 0 & 0 & 0\\
0 & 0 & 0 & \bigstar
\end{array}\right),\:\Im M_{1}=\left(\begin{array}{cccc}
0 & 0 & 0 & 0\\
0 & 0 & 0 & 0\\
0 & 0 & 0 & \bigstar\\
0 & 0 & \bigstar & 0
\end{array}\right).
\]
Moreover, 
\[
A=\left(\begin{array}{cccc}
0 & -\Div & 0 & 0\\
-\interior\Grad & 0 & 0 & 0\\
0 & 0 & 0 & 0\\
0 & 0 & 0 & 0
\end{array}\right).
\]
We see that the system has partly been turned into an ode in an infinite
dimensional state space.
\end{rem}
{}

\section{A Two-Temperature, Two-Strain Model.\label{sec:2tt-2s}}

In this section, we shall elaborate briefly on the possibility of
developing an alternative model, such that the whole pde-part in the
two-temperature model discussed in the previous section vanishes.
We start with basically the same model as in Theorem \ref{thm:wp2tt}.
As a preparation for deriving the two-temperature, two-strain model,
we consider first the following system, which is unitarily congruent
to the one in Theorem \ref{thm:wp2tt}: 

\begin{align*}
 & \partial_{0}\Big(\left(\begin{array}{cccc}
\rho_{0} & 0 & 0 & 0\\
0 & 1 & C^{-1/2}\gamma & 0\\
0 & \gamma^{*}C^{-1/2} & \left(\rho_{0}T_{0}^{-1}\lambda+\gamma^{*}C^{-1}\gamma\right) & 0\\
0 & 0 & 0 & 0
\end{array}\right)\times\\
 & \times\left(\begin{array}{c}
v\\
C^{-1/2}\sigma\\
\theta\\
\left(1-\sqrt{\kappa_{\alpha}}\interior\grad\dive\sqrt{\kappa_{\alpha}}\right)^{1/2}\sqrt{\kappa}^{-1}q/T_{0}
\end{array}\right)\Big)+\\
 & \quad+\left(\begin{array}{cccc}
0 & 0 & 0 & 0\\
0 & 0 & 0 & 0\\
0 & 0 & 0 & -M_{1,32}^{*}\\
0 & 0 & M_{1,32} & T_{0}
\end{array}\right)\left(\begin{array}{c}
v\\
C^{-1/2}\sigma\\
\theta\\
\left(1-\sqrt{\kappa_{\alpha}}\interior\grad\dive\sqrt{\kappa_{\alpha}}\right)^{1/2}\sqrt{\kappa}^{-1}q/T_{0}
\end{array}\right)+\\
 & \quad+A\left(\begin{array}{c}
v\\
C^{-1/2}\sigma\\
\theta\\
\left(1-\sqrt{\kappa_{\alpha}}\interior\grad\dive\sqrt{\kappa_{\alpha}}\right)^{1/2}\sqrt{\kappa}^{-1}q/T_{0}
\end{array}\right)\\
 & =\left(\begin{array}{c}
f\\
0\\
\rho_{0}Q/T_{0}\\
0
\end{array}\right)
\end{align*}
 with
\[
A=\left(\begin{array}{cccc}
0 & -\Div C^{1/2} & 0 & 0\\
-C^{1/2}\interior\Grad & 0 & 0 & 0\\
0 & 0 & 0 & 0\\
0 & 0 & 0 & 0
\end{array}\right),
\]
where 
\[
M_{1,32}=\sqrt{\kappa}\interior\grad\sqrt{1-\dive\kappa_{\alpha}\interior\grad}^{-1}.
\]
Taking this as a starting point and substituting $C_{\beta}\coloneqq\sqrt{\beta}C\sqrt{\beta}$
for some $\beta>0$, we may propose analogously a similar modification
of the elastic part yielding{\small{} }{\small \par}

{\small{}
\begin{align*}
 & \partial_{0}\Big(\left(\begin{array}{cccc}
\rho_{0} & 0 & 0 & 0\\
0 & 1 & C^{-1/2}\gamma & 0\\
0 & \gamma^{*}C^{-1/2} & \left(\rho_{0}T_{0}^{-1}\lambda+\gamma^{*}C^{-1}\gamma\right) & 0\\
0 & 0 & 0 & 0
\end{array}\right)\times\\
 & \quad\times\left(\begin{array}{c}
v\\
\sqrt{1-\sqrt{C_{\beta}}\interior\Grad\Div\sqrt{C_{\beta}}}\:\sqrt{C}^{-1}\sigma\\
\theta\\
\sqrt{1-\sqrt{\sqrt{\alpha}\kappa\sqrt{\alpha}}\interior\grad\dive\sqrt{\sqrt{\alpha}\kappa\sqrt{\alpha}}}\sqrt{\kappa}^{-1}q/T_{0}
\end{array}\right)\Big)+\\
 & \quad+\left(\begin{array}{ccccc}
0 & -M_{1,10}^{*} & 0 & 0 & 0\\
M_{1,10} & 0 & 0 & 0 & 0\\
0 & 0 & 0 & 0 & 0\\
0 & 0 & 0 & 0 & -M_{1,32}^{*}\\
0 & 0 & 0 & M_{1,32} & T_{0}
\end{array}\right)\left(\begin{array}{c}
v\\
\sqrt{1-\sqrt{C_{\beta}}\interior\Grad\Div\sqrt{C_{\beta}}}\:\sqrt{C}^{-1}\sigma\\
\theta\\
\sqrt{1-\sqrt{\kappa_{\alpha}}\interior\grad\dive\sqrt{\kappa_{\alpha}}}\sqrt{\kappa}^{-1}q/T_{0}
\end{array}\right)\\
 & =\left(\begin{array}{c}
f\\
0\\
\rho_{0}Q/T_{0}\\
0
\end{array}\right)
\end{align*}
} where now
\[
M_{1,10}=-\sqrt{C}\interior\grad\sqrt{1-\sqrt{C_{\beta}}\interior\Grad\Div\sqrt{C_{\beta}}}^{-1}
\]
 and 
\[
M_{1,32}=\sqrt{\kappa}\interior\grad\sqrt{1-\dive\kappa_{\alpha}\interior\grad}^{-1}.
\]

Clearly, the pattern of $M_{0}$ is still 
\[
M_{0}=\left(\begin{array}{cccc}
\bigstar & 0 & 0 & 0\\
0 & \bigstar & \bigstar & 0\\
0 & \bigstar & \bigstar & 0\\
0 & 0 & 0 & 0
\end{array}\right).
\]
But the pattern of $M_{1}$ is now
\[
\mathrm{sym}M_{1}=\left(\begin{array}{cccc}
0 & 0 & 0 & 0\\
0 & 0 & 0 & 0\\
0 & 0 & 0 & 0\\
0 & 0 & 0 & \bigstar
\end{array}\right),\:\mathrm{skew}M_{1}=\left(\begin{array}{cccc}
0 & \bigstar & 0 & 0\\
\bigstar & 0 & 0 & 0\\
0 & 0 & 0 & \bigstar\\
0 & 0 & \bigstar & 0
\end{array}\right).
\]
Moreover, 
\[
A=\left(\begin{array}{cccc}
0 & 0 & 0 & 0\\
0 & 0 & 0 & 0\\
0 & 0 & 0 & 0\\
0 & 0 & 0 & 0
\end{array}\right).
\]
We see that the system has completely been turned into an abstract
ode.
\begin{rem}
~
\begin{itemize}
\item Taking the general perspective used here into account, more complex
materials, for instance, the Maxwell-Cattaneo-Vernotte (MCV) model
of heat conduction \cite{key-1,key-2,key-9}, are easily possible
to include all these models as introduced in \cite{key-21}. For implementing
the MCV model we merely have to take $M_{0}$ with the pattern
\[
M_{0}=\left(\begin{array}{cccc}
\bigstar & 0 & 0 & 0\\
0 & \bigstar & \bigstar & 0\\
0 & \bigstar & \bigstar & 0\\
0 & 0 & 0 & \bigstar
\end{array}\right)
\]
as strictly positive definite.
\item Moreover, if we change the parameter $\alpha$ (and $\beta$) to be
bounded, selfadjoint, strictly positive definite operators in an appropriate
Hilbert spaces, we gain further flexibility for material modeling
within the framework of the first order system.
\item Given the intricate rationale used in deriving the model in the first
place it is somewhat disappointing to see that it merely serves to
approximate a pde by an ode, which of course is always possible, compare
e.g. the Yosida approximation or the above strategy, which amounts
to replacing an unbounded skew-selfadjoint operator $A$ by the bounded
skew-selfadjoint operator $A\sqrt{1-\alpha A^{2}}^{-1}=\overline{\sqrt{1-\alpha A^{2}}^{-1}A}$,
$\alpha\in\oi0\infty$. 
\end{itemize}

\section{An alternative two-temperature model\label{sec:a-2tt}}

In this section, we will make an attempt to establish an alternative
two-temperature model from a purely structural point of view. For
this, we proceed as follows:

Note that a transition to the ode setting can also be achieved for
example by approximating $A$ with $A\left(1+\epsilon A\right)^{-1}$,
(Yosida approximation). Indeed,
\begin{equation}
A\left(1+\epsilon A\right)^{-1}\overset{\epsilon\to0}{\to}A\label{eq:strong_conv}
\end{equation}
point-wise on $D\left(A\right)$.

This way the occurrence of a square root (of inverses) of unbounded
operators can be avoided. We assume the conditions of Theorem \ref{thm:wp2tt}.
For notational convenience we set $D\coloneqq\sqrt{\kappa}\interior\grad$.
Applying the idea of using \eqref{eq:strong_conv} to our initial
two-temperature model yields
\begin{align*}
 & \partial_{0}\Big(\left(\begin{array}{cccc}
\rho_{0} & 0 & 0 & 0\\
0 & C^{-1} & C^{-1}\gamma & 0\\
0 & \gamma^{*}C^{-1} & \left(\rho_{0}T_{0}^{-1}\lambda+\gamma^{*}C^{-1}\gamma\right) & 0\\
0 & 0 & 0 & 0
\end{array}\right)\times\\
 & \quad\times\left(\begin{array}{c}
v\\
\sigma\\
\theta\\
\left(1+\epsilon^{2}DD^{*}\right)\sqrt{\kappa}^{-1}q/T_{0}+\epsilon D\theta
\end{array}\right)\Big)+\\
 & \quad+\Big(\left(\begin{array}{cccc}
0 & 0 & 0 & 0\\
0 & 0 & 0 & 0\\
0 & 0 & \epsilon D^{*}D\left(1+\epsilon^{2}D^{*}D\right)^{-1} & -D^{*}\left(1+\epsilon^{2}DD^{\ast}\right)^{-1}\\
0 & 0 & D\left(1+\epsilon^{2}D^{*}D\right)^{-1} & \epsilon DD^{*}\left(1+\epsilon^{2}DD^{*}\right)^{-1}+T_{0}
\end{array}\right)\times\\
 & \quad\times\left(\begin{array}{c}
v\\
\sigma\\
\theta\\
\left(1+\epsilon^{2}DD^{*}\right)\sqrt{\kappa}^{-1}q/T_{0}+\epsilon D\theta
\end{array}\right)\Big)+\\
 & \quad+A\left(\begin{array}{c}
v\\
\sigma\\
\theta\\
\left(1+\epsilon^{2}DD^{\ast}\right)\sqrt{\kappa}^{-1}q/T_{0}+\epsilon D\theta
\end{array}\right)U\\
 & =\left(\begin{array}{c}
\rho_{0}F\\
0\\
\rho_{0}Q/T_{0}\\
0
\end{array}\right)
\end{align*}
with 
\[
A=\left(\begin{array}{cccc}
0 & -\Div & 0 & 0\\
-\interior\Grad & 0 & 0 & 0\\
0 & 0 & 0 & 0\\
0 & 0 & 0 & 0
\end{array}\right).
\]
This reduces indeed to
\begin{align*}
\partial_{0}\rho_{0}v-\Div\sigma & =f\\
\partial_{0}\left(\sigma+\gamma\theta\right)-C\interior\Grad v & =0\\
\partial_{0}\left(\gamma^{*}C^{-1}\sigma+\left(\rho_{0}T_{0}^{-1}\lambda+\gamma^{*}C^{-1}\gamma\right)\theta\right)-D^{*}\sqrt{\kappa}^{-1}q/T_{0} & =\rho_{0}Q/T_{0}
\end{align*}

and finally
\begin{align*}
D(1+\epsilon^{2}D^{\ast}D)^{-1}\theta+\\
+\epsilon DD^{\ast}\left(1+\epsilon^{2}DD^{\ast}\right)^{-1}\left((1+\epsilon^{2}DD^{\ast})\sqrt{\kappa}^{-1}q/T_{0}+\epsilon D\theta\right)+\\
+T_{0}\left((1+\epsilon^{2}DD^{\ast})\sqrt{\kappa}^{-1}q/T_{0}+\epsilon D\theta\right) & =0,\\
\left(\epsilon DD^{\ast}+T_{0}+T_{0}\epsilon^{2}DD^{\ast}\right)\sqrt{\kappa}^{-1}q/T_{0}+\\
+\left((1+\epsilon^{2}DD^{\ast})^{-1}+\epsilon^{2}DD^{\ast}\left(1+\epsilon^{2}DD^{\ast}\right)^{-1}+\epsilon T_{0}\right)D\theta & =0,\\
\left(1+\epsilon^{2}DD^{\ast}\right)\sqrt{\kappa}^{-1}q+\epsilon DD^{\ast}\sqrt{\kappa}^{-1}q/T_{0}+\left(1+\epsilon T_{0}\right)D\theta & =0,\\
\left(1+\epsilon^{2}DD^{\ast}\right)\sqrt{\kappa}^{-1}q+D\left(\epsilon D^{\ast}\sqrt{\kappa}^{-1}q/T_{0}+\theta+\epsilon T_{0}\theta\right) & =0.
\end{align*}
Thus,
\begin{align}
\sqrt{\kappa}^{-1}q+\left(1+\epsilon^{2}DD^{\ast}\right)^{-1}D\left(\epsilon D^{\ast}\sqrt{\kappa}^{-1}q/T_{0}+\theta+\epsilon T_{0}\theta\right) & =0,\nonumber \\
\sqrt{\kappa}^{-1}q+D\left(\left(1+\epsilon^{2}D^{\ast}D\right)^{-1}\epsilon D^{\ast}\sqrt{\kappa}^{-1}q/T_{0}+(1+\epsilon T_{0})\left(1+\epsilon^{2}D^{\ast}D\right)^{-1}\theta\right) & =0,\label{eq:Fourier}
\end{align}
and hence, defining
\[
\phi\coloneqq\left(1+\epsilon T_{0}\right)\left(1+\epsilon^{2}D^{*}D\right)^{-1}\theta+\epsilon\left(1+\epsilon^{2}D^{\ast}D\right)^{-1}D^{*}\sqrt{\kappa}^{-1}q/T_{0}+T_{0}
\]
and recalling that $D=\sqrt{\kappa}\interior\grad$, we end up with
\begin{align*}
\theta & =\left(1+\epsilon T_{0}\right)^{-1}\left(1+\epsilon^{2}D^{*}D\right)\left(\phi-T_{0}\right)-\frac{\epsilon}{T_{0}}\left(1+\epsilon T_{0}\right)^{-1}D^{*}\sqrt{\kappa}^{-1}q\\
 & =\left(1+\epsilon T_{0}\right)^{-1}\left(1-\epsilon^{2}\dive\kappa\interior\grad\right)\left(\phi-T_{0}\right)+\frac{\epsilon}{T_{0}}\left(1+\epsilon T_{0}\right)^{-1}\dive q
\end{align*}
and \eqref{eq:Fourier} gives the Fourier law
\[
q+\kappa\interior\grad\phi=0.
\]
Thus, using $-\epsilon^{2}\dive\kappa\interior\grad\theta=\epsilon^{2}\dive q$,
we get that
\begin{align*}
\theta & =(1+\epsilon T_{0})^{-1}\left(\phi-T_{0}\right)+(1+\epsilon T_{0})^{-1}\epsilon^{2}\dive q+(1+\epsilon T_{0})^{-1}\frac{\epsilon}{T_{0}}\dive q\\
 & =(1+\epsilon T_{0})^{-1}\left(\phi-T_{0}\right)+\frac{\epsilon}{T_{0}}\left((1+\epsilon T_{0})^{-1}\epsilon T_{0}\dive q+(1+\epsilon T_{0})^{-1}\dive q\right)\\
 & =\phi-T_{0}+\frac{\epsilon}{T_{0}}\left(\dive q-\frac{T_{0}^{2}}{1+\epsilon T_{0}}\phi\right)
\end{align*}

This can also be written as
\begin{equation}
\theta-\left(1-\frac{\epsilon T_{0}}{1+\epsilon T_{0}}\right)\left(\phi-T_{0}\right)=\frac{\epsilon}{T_{0}}\dive q\label{eq:final}
\end{equation}

Equation \eqref{eq:final} represents the final relation satisfied
by the two temperatures. The parameter $\epsilon$ would be an \emph{alternative
two-temperature parameter}.

\end{rem}

\end{document}